\title{Random Graphons and a Weak Positivstellensatz for Graphs}
\author{{\sc L\'aszl\'o Lov\'asz\footnote{Research sponsored by OTKA Grant No.~67867.}}~ and {\sc Bal\'azs Szegedy}\\
Institute of Mathematics, E\"otv\"os Lor\'and University\\
Budapest, Hungary, and\\
Department of Mathematics, University of Toronto\\
Toronto, Ontario, Canada}
\date{February 2009}

\documentclass{article}
\usepackage{amssymb,amsmath,hyperref}
\usepackage{theorem}

\sloppy\nonstopmode

\addtolength{\textwidth}{1in} \addtolength{\textheight}{1in}
\long\def\killtext#1{}
\newtheorem{theorem}{Theorem}[section]
\newtheorem{prop}[theorem]{Proposition}

\theorembodyfont{\rmfamily}

\newenvironment{proof}{\medskip\noindent{\bf Proof. }}{\hfill$\square$\medskip}

\begin{document}

\addtolength{\baselineskip}{3pt} \setlength{\oddsidemargin}{0.2in}

\def\R{{\mathbb R}}
\def\one{{\sf\bf 1}}
\def\Q{{\mathbb Q}}
\def\Z{{\mathbb Z}}
\def\N{{\mathbb N}}
\def\C{{\mathbb C}}
\def\U{{\mathbb U}}
\def\hom{{\rm hom}}
\def\iso{{\rm iso}}
\def\inj{{\rm inj}}
\def\surj{{\rm sur}}
\def\Inj{{\rm Inj}}
\def\neighb{{\rm neighb}}
\def\inj{{\rm inj}}
\def\ind{{\rm ind}}
\def\sur{{\rm sur}}
\def\PAG{{\rm PAG}}
\def\eps{\varepsilon}
\def\Ge{{\mathbf G}}
\def\Ha{{\mathbf H}}
\def\Aa{{\mathbf A}}
\def\CUT{\text{\rm CUT}}
\def\IO{{\infty\to1}}
\def\GR{\text{\rm GR}}
\def\CLQ{\text{\rm CLQ}}
\def\LT{{\text{\rm LEFT}}}
\def\RT{{\text{\rm RIGHT}}}
\def\Haus{{\rm Hf}}

\def\Fi{\mathbf{\Phi}}
\def\mul{{\rm mult}}
\def\flat{{\rm flat}}
\def\maxcut{{\sf maxcut}}
\def\MaxCut{{\sf MaxCut}}
\def\id{{\rm id}}
\def\irreg{{\rm irreg}}

\def\wh{\widehat}
\def\tv{{\rm tv}}
\def\tr{{\rm tr}}
\def\cost{\hbox{\rm cost}}
\def\val{\hbox{\rm val}}
\def\rk{{\rm rk}}
\def\diam{{\rm diam}}
\def\Ker{{\rm Ker}}
\def\QG{{\cal QG}}
\def\QGM{{\cal QGM}}
\def\CD{{\cal CD}}
\def\Pr{{\sf P}}
\def\E{{\sf E}}
\def\Var{{\sf Var}}
\def\Ent{{\sf Ent}}
\def\T{{^\top}}
\def\PD{{\sf Pd}}

\def\proofend{\hfill$\square$\medskip}

\def\AA{{\cal A}}\def\BB{{\cal B}}\def\CC{{\cal C}}
\def\DD{{\cal D}}\def\EE{{\cal E}}\def\FF{{\cal F}}
\def\GG{{\cal G}}\def\HH{{\cal H}}\def\II{{\cal I}}
\def\JJ{{\cal J}}\def\KK{{\cal K}}\def\LL{{\cal L}}
\def\MM{{\cal M}}\def\NN{{\cal N}}\def\OO{{\cal O}}
\def\PP{{\cal P}}\def\QQ{{\cal Q}}\def\RR{{\cal R}}
\def\SS{{\cal S}}\def\TT{{\cal T}}\def\UU{{\cal U}}
\def\VV{{\cal V}}\def\WW{{\cal W}}\def\XX{{\cal X}}
\def\YY{{\cal Y}}\def\ZZ{{\cal Z}}

\maketitle

\begin{abstract}
In an earlier paper the authors proved that limits of convergent
graph sequences can be described by various structures, including
certain 2-variable real functions called graphons, random graph
models satisfying certain consistency conditions, and normalized,
multiplicative and reflection positive graph parameters. In this
paper we show that each of these structures has a related, relaxed
version, which are also equivalent. Using this, we describe a further
structure equivalent to graph limits, namely probability measures on
countable graphs that are ergodic with respect to the group of
permutations of the nodes.

As an application, we prove an analogue of the Positivstellensatz for
graphs: We show that every linear inequality between subgraph
densities that holds asymptotically for all graphs has a formal proof
in the following sense: it can be approximated arbitrarily well by
another valid inequality that is a ``sum of squares'' in the algebra
of partially labeled graphs.
\end{abstract}

\tableofcontents

\section{Introduction}

In an earlier paper the authors proved that limits of convergent
graph sequences can be described by various structures, including
2-variable symmetric, measurable functions $[0,1]^2\to[0,1]$, random
graph models satisfying a ``consistency'' and a ``locality''
condition, and normalized, multiplicative and reflection positive
graph parameters (see Theorem \ref{THM:LIM-CHAR} and Proposition
\ref{PROP:DAG-SEMI-1}).

In this paper we show that each of these structures has a related,
relaxed version: We can drop the multiplicativity condition on the
graph parameter, replacing it with the simple condition that deleting
isolated nodes does not change the value of the parameter. We can
drop the ``locality'' condition on the random graph model. We can
replace the graphon by a probability distribution of the graphon. As
the first main result of this paper, we prove that these relaxed
versions are also equivalent.

This result will be used in adding a further equivalent structure to
the list of structures describing graph limits: a probability measure
on countable graphs that is ergodic with respect to the group of
permutations of the nodes.

As an application, we prove an analogue of the Positivstellensatz for
graphs. Many fundamental theorems in extremal graph theory can be
expressed as linear inequalities between subgraph densities. For
example, the Mantel--Tur\'an Theorem is implied by the linear
inequality that the density of triangles is always at least the
edge-density minus $\frac12$. (To be more precise, using
``homomorphism densities'' to be defined in Section \ref{PRELIM}, we
get inequalities that hold true for all graphs; in terms of subgraph
densities, we get in general only asymptotic results with some error
terms.)

It has been observed long ago that most of these extremal results
seem to follow by one of more tricky applications of the
Cauchy--Schwartz inequality. We confirm this in the following sense:
we show that every linear inequality between homomorphism densities
that holds for all graphs can be derived, up to an arbitrarily small
error term, by the Cauchy--Schwartz Inequality. To make the last
phrase precise, we use graph algebras introduced by Freedman,
Lov\'asz and Schrijver in \cite{FLS}. The square of an algebra
element, when expanded, yields a valid linear inequality between
homomorphism densities. Sums of such inequalities yield further valid
linear inequalities, and our result says that such sums of squares
are dense among all valid linear inequalities.

\section{Preliminaries}\label{PRELIM}

\subsection{Homomorphism densities and limits}

In this paper, all graphs are simple. If we don't quantify, we also
mean that the graph is finite.

For two graphs $F$ and $G$, we write $F\cong G$ if they are
isomorphic, and $F\simeq G$ if they become isomorphic after their
isolated nodes are deleted. So the graph $U_n$ consisting of $n$
isolated nodes satisfies $U_n\simeq K_0\cong U_0$.

For two graphs $F$ and $G$, let $\hom(F,G)$ denote the number of
homomorphisms (adjacency-preserving maps) from $F$ to $G$, and
$\inj(F,G)$, the number of injective homomorphisms from $F$ to $G$.
We consider the {\it homomorphism densities}
\[
t(F,G)=\frac{\hom(F,G)}{|V(G)|^{|V(F)|}},
\]
and {\it subgraph densities}
\[
t_\inj(F,G)=\frac{\inj(F,G)}{|V(G)|\cdot(|V(G)|-1)\cdots(|V(G)|-|V(F)|+1)},
\]

Let $\WW_0$ denote the set of symmetric measurable functions
$W:~[0,1]^2\to[0,1]$. A {\it graphon} is any function in $\WW_0$. For
every graph $F$ and graphon $W$, we define the density of $F$ in $W$
by
\[
t(F,W)=\int_{[0,1]^V} \prod_{ij\in E} W(x_i,x_j)\,\prod_{i\in V} dx_i
\]

To every graph $G$ we can assign a graphon $W_G$ as follows: Let
$V(G)=[n]$. Split $[0,1]$ into $n$ intervals $J_1,\dots,J_n$ of
length $\lambda(J_i)=\alpha_i/\alpha_G$. For $x\in J_i$ and $y\in
J_j$, let $W_G(x,y)=\one_{ij\in E(G)}$. With this construction, we
have $t(F,G)=t(F,W_G)$ for all finite graphs $F$.

We consider on $\WW_0$ the {\it cut norm}
\[
\|W\|_\square = \sup_{S,T\subseteq[0,1]}\Bigl|\int_{S\times T}
W(x,y)\,dx\,dy\Bigr|
\]
where the supremum is taken over all measurable subsets $S$ and $T$,
and the {\it cut distance}
\[
\delta_\square(U,W)=\inf_{\phi,\psi} \|U^\phi-W^\psi\|_\square,
\]
where $\phi,\psi$ range over all measure preserving maps from
$[0,1]\to[0,1]$, and $W^\phi(x,y)=W(\phi(x),\phi(y))$
\cite{BCLSV0,BCLSSV}. This also defines a distance between graphs by
\[
\delta_\square(F,G)=\delta_\square(W_F,W_G).
\]
(See \cite{BCLSV1} for more combinatorial definitions of this graph
distance.)

We note that $\delta_\square(U,W)=0$ can hold for two different
graphons: $\delta_\square(W^\phi,W^\psi)=0$ for every graphon $W$ and
measure preserving maps $\phi, psi:~[0,1]\to[0,1]$. (It was proved in
\cite{BCL} that this gives all pairs of graphons with distance $0$.)
We call two graphons {\it weakly isomorphic} if their distance is
$0$.

It was proved in \cite{LSz3} that $(\WW_0,\delta_\square)$ is a
compact metric space.

A sequence of graphs $(G_n)$ with $|V(G_n)|\to\infty$ is {\it
convergent} if the densities $t(F,G_n)$ converge for all finite
graphs $F$. This is clearly equivalent to saying that the subgraph
densities $t_\inj(F,G_n)$ converge for all finite graphs $F$.

It was proved in \cite{BCLSV1} that a graph sequence is convergent if
and only if it is Cauchy in the $\delta_\square$ distance. It was
proved in \cite{LSz1} that for every convergent graph sequence there
is a limit object in the form of a function $W\in\WW$, so that
\[
t(F,G_n)\to t(F,W)\qquad \text{for all graphs}~F.
\]
In \cite{BCLSV1} it was shown that this is equivalent to
$\delta_\square(W_{G_n},W)\to 0$. In \cite{BCL} it was proved that
this limit is uniquely determined up to weak isomorphism.

\subsection{Partially labeled graphs and quantum graphs}

A {\it $k$-labeled graph} is a graph in which $k$ of the nodes are
labeled by $1,\dots,k$ (there may be any number of unlabeled nodes).
A $0$-labeled graph is just an unlabeled graph. Let $\FF_k$ denote
the set of $k$-labeled graphs (up to label-preserving isomorphism).

A $k$-labeled graph $F$ is called {\it flat} if $V(F)=[k]$. Let
$\FF'_k$ denote the set of all flat $k$-labeled graphs.

Let $F_1$ and $F_2$ be two $k$-labeled graphs. We define the
$k$-labeled graph $F_1F_2$ by taking their disjoint union, and then
identifying nodes with the same label (if multiple edges arise, we
only keep one copy). Clearly this multiplication is associative and
commutative. For two $0$-labeled graphs, $F_1F_2$ is their disjoint
union.

Sometimes it is more convenient to combine $k$-labeled graphs into a
single structure. A {\it partially labeled graph} is a finite graph
in which some of the nodes are labeled by distinct positive integers.
For two partially labeled graphs $F_1$ and $F_2$, let $F_1F_2$ denote
the partially labeled graph obtained by taking their disjoint union,
and identifying nodes with the same label. Let $\FF^*$ denote the set
of partially labeled graphs (up to isomorphism).

A {\it quantum graph} is defined as a formal linear combination of
graphs with real coefficients. A {\it $k$-labeled quantum graph} is
defined similarly as a formal linear combination of $k$-labeled
graphs. The product of $k$-labeled graphs defined above extends to
quantum graphs by distributivity: if $f=\sum_{i=1}^n \lambda_i F_i$
and $g=\sum_{j=1}^m \mu_j G_j$, then $fg=\sum_{i=1}^n\sum_{j=1}^m
\lambda_i\mu_j  F_iG_j$.

\subsection{Graph parameters}

A {\it graph parameter} is a real valued function defined on
isomorphism types of graphs (including the graph $K_0$ with no nodes
and edges). Let $f$ be any graph parameter and fix an integer $k\ge
0$. We define the $k$-th {\it connection matrix} of the graph
parameter $f$ as the (infinite) symmetric matrix $M(f,k)$, whose rows
and columns are indexed by (isomorphism types of) $k$-labeled graphs,
and the entry in the intersection of the row corresponding to $F_1$
and the column corresponding to $F_2$ is $f(F_1F_2)$. The {\it flat
connection matrix} $M_\flat(f,k)$ is the submatrix of $M(f,k)$ formed
by rows and columns corresponding to flat $k$-labeled graphs (this
matrix is finite).

We denote by $\MM$ the space of $\FF^*\times\FF^*$ matrices (these
are infinite matrices). For a graph parameter $f$, we define the {\it
full connection matrix} as the symmetric matrix $M(f)\in\MM$, whose
entry in the intersection of the row corresponding to $F_1$ and the
column corresponding to $F_2$ is $f(F_1F_2)$. Clearly this matrix
contains as a submatrix all connection matrices $M(f,k)$. In the
other direction, we note that every finite submatrix of $M(f)$ is
contained as a submatrix in one of the matrices $M(f,k)$.

Let $f$ be a graph parameter. We say that $f$ is {\it
isolate-indifferent} if $f(G)=f(G')$ whenever $G\simeq G'$. The
parameter is {\it multiplicative} if $f(FG)=f(F)f(G)$, where $FG$
denotes the disjoint union of the graphs $F$ and $G$.

For every graph parameter $f$, we define its {\it M\"obius transform}
$f^\dag$ by
\[
f^\dag(F)=\sum_{F':\,V(F')=V(F)\atop E(F')\supseteq E(F)}
(-1)^{|E(F')\setminus E(F)|}f(F').
\]

We say that $f$ is {\it normalized} if $f(K_0)=f(K_1)=1$. Note that
for a multiplicative parameter, it would be enough to assume
$f(K_1)=1$, while for an isolate-indifferent parameter, it would be
enough to assume $f(K_0)=1$. Trivially, if a graph parameter is
multiplicative and normalized, then it is isolate-indifferent.

We call a graph parameter {\it reflection positive} if all of its
connection matrices are positive semidefinite (this is equivalent to
saying that its full connection matrix $M(f)$ is positive
semidefinite). We call it {\it flatly reflection positive} if all its
flat connection matrices are positive semidefinite.

We denote by $\KK$ the linear space of matrices $A\in\MM$ in which
$A_{F_1,G_1}=A_{F_2,G_2}$ if $F_1G_1\cong F_2G_2$, and by $\LL$, the
linear space of matrices $A\in\MM$ in which $A_{F_1,G_1}=A_{F_2,G_2}$
if $F_1G_1\simeq F_2G_2$. Clearly connection matrices define a
bijection between matrices in $\KK$ and graph parameters. Under this
bijection, matrices in $\LL$ correspond to isolate-indifferent graph
parameters.

Let $\PP\subseteq\MM$ denote the cone of positive semidefinite
matrices in $\MM$. Reflection positive graph parameters correspond to
matrices in $\PP\cap\KK$.

\subsection{Random graph models}

A {\it random graph model} is a sequence $(\Pr_n:~n=0,1,2,\dots)$,
where $\Pr_n$ is a probability distribution on graphs on $[n]$. Let
$\Ge_n$ be a random graph from distribution $\Pr_n$. We say that the
random graph model is {\it consistent}, if the distribution $\Pr_n$
is invariant under relabeling nodes, and if we delete node $n$ from
$\Ge_n$, the distribution of the resulting graph is the same as the
distribution of $\Ge_{n-1}$.

We say that the random graph model is {\it local}, if for every
$S\subseteq[n]$, the subgraphs of $\Ge_n$ induced by $S$ and
$[n]\setminus S$ are independent (as random variables).

Let $\binom{\N}{2}$ denote the set of all unordered pairs from $\N$.
Every subset of $\binom{\N}{2}$ can be thought of as a graph on node
set $\N$, and $\{0,1\}^{\binom{\N}{2}}$ is the set of all graphs on
$\N$. Let $\AA$ denote the $\sigma$-algebra on
$\{0,1\}^{\binom{\N}{2}}$ generated by the sets obtained by fixing
whether a given pair is connected or not.

A {\it random countable graph model} is a probability distribution
$\Pr$ on $(\{0,1\}^{\binom{\N}{2}},\AA)$. Such a distribution is {\it
consistent} if the distribution of the labeled subgraph induced by an
ordered finite set $S$ depends only on the size of $S$. The
distribution is {\it local} if for any two finite disjoint subsets
$S_1,S_2\subseteq\N$, the subgraphs induced by $S_1$ and $S_2$ are
independent (as random variables). The distribution is {\it
invariant} if it is invariant under permutations of $\N$. The
distribution is {\it ergodic} if there is no set $S\in\AA$ with
$0<\pi(S)<1$ invariant under permutations of $\N$. Invariant measures
form a convex set in the linear space of all signed measures, and
ergodic measures are the extreme points of this convex set.

A probability distribution on the Borel sets of
$(\WW_0,\delta_\square)$ will be called a {\it random graphon model}.
Note that the $\sigma$-algebra of Borel sets does not distinguish
weakly isomorphic graphons.

\section{Equivalent forms of the limit object}\label{LIMITFORMS}

\subsection{Graph limits and random graph limits}\label{RANDLIM}

We quote the following theorem, which was proved essentially in
\cite{LSz1}.

\begin{theorem}\label{THM:LIM-CHAR}
The following are equivalent (cryptomorphic):

\smallskip

{\rm(a)} A multiplicative, normalized graph parameter with
nonnegative M\"obius transform;

\smallskip

{\rm(b)} A consistent and local random graph model;

\smallskip

{\rm(c)} A consistent and local random countable graph model;

\smallskip

{\rm(d)} A graphon, up to weak isomorphism.

\smallskip

{\rm(e)} A point in the completion of the set of finite graphs with
the cut-metric;
\end{theorem}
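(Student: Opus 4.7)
The plan is to establish the five-way equivalence by combining easy implications with the core theorem of \cite{LSz1}. First, (d)$\Leftrightarrow$(e) is essentially recorded in the preliminaries: a Cauchy graph sequence in $\delta_\square$ is convergent, every convergent sequence has a graphon limit $W$ with $\delta_\square(W_{G_n},W)\to 0$, and the limit is unique up to weak isomorphism by \cite{BCL}; combined with the compactness of $(\WW_0,\delta_\square)$ from \cite{LSz3}, this identifies the completion of the finite graphs under the cut metric with $\WW_0$ modulo weak isomorphism.

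For (d)$\Rightarrow$(a), given a graphon $W$, set $f(F)=t(F,W)$. Multiplicativity reduces to the fact that the density integral factors on disjoint unions, normalization is immediate, and inclusion--exclusion identifies $f^\dag(F)$ with the induced-subgraph density of $F$ in $W$, which is manifestly nonnegative.

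For the equivalence of (d) with (b) and with (c), I would use $W$-random sampling: draw $X_1,X_2,\dots$ i.i.d.\ uniform on $[0,1]$ and include each edge $ij$ independently with probability $W(X_i,X_j)$. Restricting to $[n]$ produces $\Pr_n$, and the resulting sequence is consistent and invariant by construction. Locality (in both the finite and the countable versions) holds because edges within two disjoint vertex sets depend on disjoint families of the $X_i$'s and the independent edge-coins. In the reverse direction, Kolmogorov extension passes from a consistent finite model to a countable one, and a local consistent model yields an (a)-style parameter by taking $f(F)$ to be the common value of $\E[t(F,\Ge_n)]$; existence of this value uses consistency, multiplicativity of $f$ follows from locality, and nonnegativity of $f^\dag$ is automatic because that transform is an induced-subgraph probability.

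This leaves the single hard implication (a)$\Rightarrow$(d): from the abstract data of a normalized, multiplicative graph parameter with nonnegative M\"obius transform one must produce an actual graphon representing it. The strategy is to exhibit $f$ as the limit of $t(F,G_n)$ along a sequence of finite graphs $G_n$ -- the nonnegativity of $f^\dag$ lets one realize finite collections of values of $f$ up to small error by suitable finite graphs, and a diagonal/compactness argument on the compact space $(\WW_0,\delta_\square)$ extracts a limit -- after which the graphon limit theorem of \cite{LSz1} produces the required $W\in\WW_0$. This step, which ultimately rests on the weak regularity lemma underlying \cite{LSz1}, is the main obstacle to a self-contained proof.
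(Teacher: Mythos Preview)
The paper does not actually prove this theorem: it is stated as a quotation from \cite{LSz1}, with no argument given in the present paper. So there is no in-paper proof to compare against; your outline is a plausible reconstruction of what \cite{LSz1} does, and the easy directions you sketch (density functionals of graphons are multiplicative, normalized, with nonnegative M\"obius transform; $W$-random graphs are consistent and local because the induced subgraphs on disjoint vertex sets depend on disjoint blocks of i.i.d.\ variables; Kolmogorov extension for (b)$\Rightarrow$(c); the metric identification (d)$\Leftrightarrow$(e)) are all correct.

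One comment on the hard step (a)$\Rightarrow$(d). Your description --- ``the nonnegativity of $f^\dag$ lets one realize finite collections of values of $f$ up to small error by suitable finite graphs, and a diagonal/compactness argument extracts a limit'' --- is not quite the mechanism used in \cite{LSz1}, and as stated it is vague about \emph{how} nonnegativity of $f^\dag$ produces such approximating graphs. The actual route is closer to what the present paper does in the proof of Theorem~\ref{THM:LIM-CHAR1}: use $f^\dag\ge 0$ to define a consistent random graph model via $\Pr(\Ge_n=F)=f^\dag(F)$, sample a countable graph, observe that the induced subgraphs $\Ge_n$ form an almost surely convergent sequence (this is where the sampling lemma / cut-metric estimates enter), and take the graphon limit $\mathbf{W}$. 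Multiplicativity of $f$ corresponds to locality of the model, which forces the random limit $\mathbf{W}$ to be almost surely a fixed graphon (compare Proposition~\ref{PROP:ERGOD-2}). So the ``suitable finite graphs'' are not hand-picked but are the random samples themselves. Apart from this clarification, your plan is sound.
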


The following theorem shows that in each of these objects, we can
naturally relax the conditions, to get another important set of
cryptomorphic structures.

\begin{theorem}\label{THM:LIM-CHAR1}
The following are equivalent (cryptomorphic):

\smallskip

{\rm(a)} An isolate-indifferent, normalized graph parameter with
nonnegative M\"obius transform;

\smallskip

{\rm(b)} A consistent random graph model;

\smallskip

{\rm(c)} A consistent random countable graph model;

\smallskip

{\rm(d)} A random graphon model.
\end{theorem}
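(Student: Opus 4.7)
The plan is to identify (a)--(d) as four affine parametrizations of a single convex set, whose extreme points are precisely the objects classified by Theorem~\ref{THM:LIM-CHAR}. Each condition is closed under convex combinations, and each has a natural ``deterministic/extremal'' specialization---multiplicativity in (a), locality in (b) and (c), Dirac mass in (d)---that matches Theorem~\ref{THM:LIM-CHAR}. For (a)$\Leftrightarrow$(b), given an isolate-indifferent, normalized $f$ with $f^\dag\ge 0$, I set $\Pr_n(F):=f^\dag(F)$ for each graph $F$ with $V(F)=[n]$. By Möbius inversion $\sum_{F:V(F)=[n]} f^\dag(F)=f(U_n)$, which equals $f(K_0)=1$ by isolate-indifference and normalization; so $\Pr_n$ is a probability distribution, and relabeling invariance is automatic. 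The delete-vertex consistency reduces, after swapping sums, to a signed sum $\sum_{S\subseteq N_{F'}(n)}(-1)^{|S|}$ over subsets $S$ of the neighborhood of $n$ in a candidate supergraph $F'$; this sum vanishes unless vertex $n$ is isolated in $F'$, and isolate-indifference of $f$ then collapses the remaining contributions to $f^\dag$ of the graph on $[n-1]$. The inverse map is $f(F):=\sum_{F'\supseteq F,\,V(F')=V(F)}\Pr_{|V(F)|}(F')$, and isolate-indifference of $f$ is exactly the consistency of $\Pr$ under adding an isolated vertex.

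Step (b)$\Leftrightarrow$(c) is the Kolmogorov extension theorem: a consistent sequence of finite-dimensional distributions extends uniquely to a probability measure on $(\{0,1\}^{\binom{\N}{2}},\AA)$, and ``consistent'' in the two senses match precisely on the finite-dimensional marginals. For (c)$\Leftrightarrow$(d), a consistent random countable graph model is an exchangeable probability measure $\pi$ on $(\{0,1\}^{\binom{\N}{2}},\AA)$ (since consistency, requiring the induced distribution on an ordered finite set $S$ to depend only on $|S|$, already implies invariance under all finite permutations of $\N$). Given a random graphon model $\mu$, sampling $W\sim\mu$ and then generating a countable graph from $W$ yields an exchangeable, hence consistent, model, giving (d)$\Rightarrow$(c). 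For the converse, apply the ergodic decomposition of exchangeable measures to write $\pi=\int\pi_\omega\,d\nu(\omega)$ with each $\pi_\omega$ ergodic. By Theorem~\ref{THM:LIM-CHAR}, ergodic exchangeable measures are in bijection with graphons up to weak isomorphism---locality and ergodicity coincide for exchangeable measures, since both characterize the extreme points of the exchangeable simplex, which by Theorem~\ref{THM:LIM-CHAR}(c)$\Leftrightarrow$(d) are exactly the graphon-derived measures. Pushing $\nu$ forward along $\omega\mapsto W_\omega$ produces the desired random graphon model.

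The principal obstacle is measurability in this last step: one must verify that the assignment from ergodic exchangeable measures to weak-isomorphism classes of graphons is Borel, so that the pushforward of $\nu$ lands in the Borel $\sigma$-algebra on $(\WW_0,\delta_\square)$. Compactness of $(\WW_0,\delta_\square)$ from \cite{LSz3} ensures it is a standard Borel space, and since the sampling procedure from graphons to random countable graphs is explicit and continuous in the relevant topologies, the Borel character of the bijection should follow from general descriptive-set-theoretic considerations. A secondary check is that the three affine bijections constructed here agree, on the extreme points, with the correspondences of Theorem~\ref{THM:LIM-CHAR}---this is essentially forced by the fact that multiplicativity, locality, and Dirac-mass conditions each cut out precisely the extremal part of the respective convex set.
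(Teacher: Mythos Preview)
Your treatment of (a)$\Leftrightarrow$(b) and (b)$\Leftrightarrow$(c) matches the paper's argument essentially verbatim, and your (d)$\Rightarrow$(c) is fine.

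The gap is in (c)$\Rightarrow$(d). You decompose $\pi=\int\pi_\omega\,d\nu(\omega)$ with each $\pi_\omega$ ergodic, and then assert that ergodic exchangeable measures are exactly the graphon-derived ones because ``locality and ergodicity coincide for exchangeable measures, since both characterize the extreme points of the exchangeable simplex.'' But Theorem~\ref{THM:LIM-CHAR} only gives \emph{local} $\Leftrightarrow$ graphon-derived; it says nothing about extreme points. The identification ergodic $=$ extreme is standard Choquet theory, but the identification local $=$ extreme is precisely what is at stake and is not supplied by Theorem~\ref{THM:LIM-CHAR}. In fact local $\Leftrightarrow$ ergodic is the content of Proposition~\ref{PROP:ERGOD-2}, whose proof \emph{uses} Theorem~\ref{THM:LIM-CHAR1}; invoking it here is circular. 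The missing ingredient---that every ergodic exchangeable array on $\binom{\N}{2}$ arises from a single graphon---is essentially the Aldous--Hoover theorem, which you have not cited and which is not a triviality.

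The paper sidesteps all of this with a direct construction. Given the consistent random countable graph $\Ge$, let $\Ge_n=\Ge[[n]]$. Consistency makes $\Ge_n$ distributed as a uniform $n$-sample from $\Ge_m$ for any $m\ge n$, so the sampling estimate from \cite{BCLSV1} gives $\delta_\square(\Ge_n,\Ge_m)\le 10/\sqrt{\log n}$ with probability at least $1-\exp(-n^2/(2\log n))$. A Borel--Cantelli argument along the subsequence $n=2^k$, followed by interpolation to general $n$, shows that $(W_{\Ge_n})$ is almost surely Cauchy in the compact space $(\WW_0,\delta_\square)$, hence converges to some $\mathbf{W}$. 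This $\mathbf{W}$ is the random graphon, and the correspondence with $f$ is recovered via $\E(t(F,\mathbf{W}))=\lim_n \E(t_\inj(F,\Ge_n))=\Pr(F\subseteq\Ge_k)$. Measurability is automatic because $\mathbf{W}$ is an almost-sure limit of measurable functions of $\Ge$---so the descriptive-set-theoretic worry you correctly flagged for the ergodic-decomposition route simply does not arise.
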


\begin{proof}
We describe a cycle of constructions, mapping one object in the
theorem to the next.

(a)$\to$(b). Let $f$ be an isolate-indifferent, reflection positive,
normalized graph parameter with nonnegative M\"obius transform. Using
that $f$ is isolate-indifferent, we get
\[
\sum_{F:\,V(F)=[n]} f^{\dag}(F) = f(U_n) = 1.
\]
So we can construct a random graph $\Ge_n$ on $[n]$ by
\begin{equation}\label{EQ:FDAGRAND}
\Pr(\Ge_n=F)=f^\dag(F) \qquad (V(F)=[n]).
\end{equation}
It is clear that this distribution does not depend on the labeling of
the nodes. Let $F_0$ be a graph on $[n-1]$, and let $F_0^+$ be
obtained from $F_0$ by adding $n$ as an isolated node. Then
\begin{align*}
\Pr(\Ge_n\setminus \{n\}=F_0) & = \sum_{F:~F\setminus
\{n\}=F_0}\Pr(\Ge_n=F) = \sum_{F:~F\setminus \{n\}=F_0} f^\dag(F)\\
&= \sum_{F:~F\setminus \{n\}=F_0}\sum_{F'\supseteq F}
(-1)^{|E(F')|-|E(F)|}f(F')\\
&= \sum_{F'\supseteq F_0^+}f(F') \sum_{F\subseteq F'\atop F\setminus
\{n\}=F_0} (-1)^{|E(F')|-|E(F)|}
\end{align*}
Here the last sum is $0$ unless $F'$ contains no edges incident with
the node $n$, and so $f(F')=f(F'')$, where $F''=F'\setminus \{n\}$.
Thus
\[
\Pr(\Ge_n\setminus \{n\}=F_0) = \sum_{F''\supseteq F_0}f(F'')
(-1)^{|E(F'')|-|E(F_0)|}=f^\dag(F_0).
\]
Thus this model is consistent. We note that $f$ can be recovered by
\begin{equation}\label{EQ:FRAND}
f(F)=\Pr(F\subseteq \Ge_n)\qquad (V(F)=[n]).
\end{equation}

(b)$\to$(c). Let $\Ge_n$ be a random graph from a consistent finite
random graph model, we construct a countable random graph model by
$\pi(A_F)=\Pr(\Ge_n=F)$ ($V(F)=[n]$). This extends to a probability
measure on the $\sigma$-algebra $\AA$. It is straightforward to check
that this measure is consistent.

\smallskip

(c)$\to$(d). Let $\Ge$ be a random countable graph from a consistent
countable random graph model, we construct a probability distribution
on the Borel sets of $(\WW_0,\delta_\square)$. Let $\Ge_n$ be the
finite graph spanned by the first $n$ nodes of $\Ge$.

We claim that with probability $1$, the graph sequence $(\Ge_n)$ is
convergent. Theorem 2.11 in \cite{BCLSV1} implies that
\[
\delta_\square(\Ge_n,\Ge_m)\le \frac{10}{\sqrt{\log n}}
\]
with probability $1-\exp(-n^2/(2\log n))$.

Let $\Ha_k=\Ge_{2^k}$, then
\[
\Pr\Bigl(\delta_\square(\Ha_k,\Ha_{k+1})> \frac{10}{2^{k/2}}\Bigr) <
\exp\Bigl(\frac{-2^{2^k}}{2^{k+1}}\Bigr),
\]
and so by the Borel-Cantelli Lemma,
\[
\delta_\square(\Ha_k,\Ha_{k+1})\le \frac{10}{2^{k/2}}
\]
holds for all but a finite number of values of $k$, with probability
$1$. Hence with probability $1$, the sequence $(W_{\Ha_k})$ is a
Cauchy sequence in $(\WW_0,\delta_\square)$.

Now for a general value of $n$, let $k_n=\lceil \log\log n\rceil$.
Then as before, we get that
\[
\Pr\Bigl(\delta_\square(\Ge_n,\Ha_{k_n})>\frac{10}{\sqrt{\log
n}}\Bigr) < \exp\Bigl(\frac{-n^2}{2\log n}\Bigr).
\]
Again by the Borel-Cantelli Lemma,
\[
\delta_\square(\Ge_n,\Ha_{k_n})\le \frac{10}{\sqrt{\log n}}
\]
holds for all but a finite number of $n$, with probability $1$. This
proves that the sequence $(\Ge_n)$ is Cauchy. Thus it tends to a
limit graphon $\mathbf{W}$.

So we have described a method to generate a random graphon
$\mathbf{W}$. For every graph $F$, this satisfies
\[
t(F,\mathbf{W})=\lim_{n\to\infty} t(F, \Ge_n)=\lim_{n\to\infty}
t_\inj(F, \Ge_n).
\]
By the consistency of $\Ge$, the expectation of $t_\inj(F, \Ge_n)$ is
independent of $n$ for $n\ge k=|V(F)|$, and so
\[
\E(t(F,\mathbf{W}))=\lim_{n\to\infty} \E(t_\inj(F,
\Ge_n))=\E(t_\inj(F, \Ge_k))= \Pr(F\subseteq \Ge_k).
\]

(d)$\to$(a). Let $\mathbf{W}$ be a random graphon from any
probability distribution on the Borel sets of
$(\WW_0,\delta_\square)$. This defines a graph parameter $f$ by
\[
f(F)=\E(t(F,\mathbf{W}).
\]
For every fix $W\in\WW_0$, the graph parameter $f(.)=t(.,W)$ is
normalized, isolate-indifferent (since it is multiplicative), and has
nonnegative M\"obius transform (by Theorem \ref{THM:LIM-CHAR}).
Trivially, these properties are inherited by the expectation.
\end{proof}

\subsection{More equivalences}

In theorems \ref{THM:LIM-CHAR} and \ref{THM:LIM-CHAR1}, we listed
several seemingly quite different objects that have turned out
equivalent. In this section we show that these objects have
alternative characterizations. The following characterization of
graph parameters occurring in Theorem \ref{THM:LIM-CHAR} was proved
in \cite{LSz1}.

\begin{prop}\label{PROP:DAG-SEMI-1}
Let $f$ be a multiplicative, normalized graph parameter. Then the
following are equivalent:

\smallskip

{\rm(a)} $f$ is reflection positive;

\smallskip

{\rm(b)} $f$ is flatly reflection positive;

\smallskip

{\rm(c)} $f$ has nonnegative M\"obius transform;

\smallskip

{\rm(d)} $f=t(.,W)$, where $W$ is a graphon.

\smallskip

{\rm(e)} $f$ is the limit of homomorphism density functions.
\end{prop}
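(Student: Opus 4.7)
The plan is to exploit Theorem \ref{THM:LIM-CHAR}, which already establishes the equivalence (c)$\,\Leftrightarrow\,$(d) for multiplicative normalized $f$, and insert (a), (b), (e) into the cycle by proving (d)$\,\Rightarrow\,$(a)$\,\Rightarrow\,$(b)$\,\Rightarrow\,$(d) together with (d)$\,\Leftrightarrow\,$(e). Note that multiplicativity and normalization already imply isolate-indifference, since $f(G\sqcup K_1)=f(G)f(K_1)=f(G)$; this will let us move freely between flat and non-flat graphs by adding or deleting isolated labeled vertices.

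To prove (d)$\,\Rightarrow\,$(a), fix a graphon $W$ and, for each $k$-labeled graph $F$, define $\tau_F:[0,1]^k\to\R$ by integrating $\prod_{ij\in E(F)}W(y_i,y_j)$ over the unlabeled coordinates (with $y_i=x_i$ for $i\in[k]$). Since the unlabeled vertices of $F_1F_2$ split as the disjoint union of those of $F_1$ and $F_2$, Fubini gives $t(F_1F_2,W)=\langle\tau_{F_1},\tau_{F_2}\rangle_{L^2}$, exhibiting $M(f,k)$ as a Gram matrix; hence it is positive semidefinite. Since every finite submatrix of $M(f)$ lies inside some $M(f,k)$, the full matrix $M(f)$ is PSD as well. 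The implication (a)$\,\Rightarrow\,$(b) is immediate because $M_\flat(f,k)$ is a principal submatrix of $M(f,k)$.

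The substantive step is (b)$\,\Rightarrow\,$(d), for which I would invoke the graphon-construction theorem of \cite{LSz1}: a multiplicative, normalized, reflection positive parameter arises as $t(\cdot,W)$ for some graphon $W$. The key refinement needed here is that flat reflection positivity already suffices. I would argue this by the following reduction: any $k$-labeled graph $F$ with $m$ unlabeled nodes can be promoted to a flat $(k+m)$-labeled graph by labeling its unlabeled vertices, and entries $f(F_1 F_2)$ of $M(f,k)$ can then be recovered from entries of $M_\flat(f,k')$ for a suitable $k'$, after absorbing extra isolated labeled nodes via isolate-indifference. So positivity of all flat matrices forces positivity of all $M(f,k)$, reducing (b) to (a). This combinatorial reduction to the flat case is the main obstacle, as the graphon-construction half is essentially \cite{LSz1}.

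Finally, for (d)$\,\Leftrightarrow\,$(e): given a graphon $W$, sampling the $W$-random graph on $n$ nodes produces a sequence whose homomorphism densities tend to $t(\cdot,W)$ almost surely, proving (d)$\,\Rightarrow\,$(e). Conversely, if $t(F,G_n)\to f(F)$ for every $F$, then $(W_{G_n})$ is Cauchy in $(\WW_0,\delta_\square)$ \cite{BCLSV1}, and compactness of this space \cite{LSz3} yields a graphon limit $W$ with $f=t(\cdot,W)$, giving (e)$\,\Rightarrow\,$(d).
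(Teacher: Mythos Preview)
Your cycle is reasonable and the implications (d)$\Rightarrow$(a), (a)$\Rightarrow$(b), (d)$\Leftrightarrow$(e) are fine. The paper itself does not prove this proposition but cites \cite{LSz1}; however, its proof of the parallel Proposition~\ref{PROP:DAG-SEMI-2} shows how (b) is meant to be used, and that is where your argument has a real gap. Your reduction (b)$\Rightarrow$(a) claims that positivity of all flat connection matrices forces positivity of $M(f,k)$ because entries $f(F_1F_2)$ ``can be recovered from entries of $M_\flat(f,k')$.'' Recovering individual entries is not the issue; you need an entire finite principal submatrix of $M(f,k)$ to sit as a principal submatrix of some $M_\flat(f,k')$, and the promotion you describe does not achieve this. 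If $F_1',\dots,F_n'$ are flat $k'$-labeled extensions of $F_1,\dots,F_n$, then in $F_i'F_j'$ all $k'$ labels are glued, including the newly assigned ones; to avoid spurious identifications the new label sets would have to be pairwise disjoint, but then already the diagonal fails, since flat graphs are idempotent ($F_i'F_i'=F_i'$) whereas $F_iF_i$ carries two disjoint copies of the unlabeled part of $F_i$. Concretely, for $k=1$ and $F$ an edge with one labeled endpoint, $FF$ is the $3$-vertex path while any flat promotion $F'$ gives $F'F'=F'\cong K_2$, so $f(FF)\neq f(F'F')$ in general.

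The route that works, and that the paper uses in Proposition~\ref{PROP:DAG-SEMI-2}, is (b)$\Rightarrow$(c) via the Lindstr\"om--Wilf diagonalization $M_\flat(f,k)=Z^\top D Z$ with $Z_{F_1,F_2}=\one_{F_1\subseteq F_2}$ and $D_{F,F}=f^\dag(F)$; since $Z$ is unitriangular, $M_\flat(f,k)$ is positive semidefinite iff $f^\dag\ge 0$ on all $k$-vertex graphs. Then (c)$\Rightarrow$(d) is Theorem~\ref{THM:LIM-CHAR}, and you already have (d)$\Rightarrow$(a). Replace your promotion argument by this diagonalization step and the proof closes.
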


For graph parameters in Theorem \ref{THM:LIM-CHAR1}, we have the
following.

\begin{prop}\label{PROP:DAG-SEMI-2}
Let $f$ be an isolate-indifferent, normalized graph parameter. Then
the following are equivalent:

\smallskip

{\rm(a)} $f$ is reflection positive;

\smallskip

{\rm(b)} $f$ is flatly reflection positive;

\smallskip

{\rm(c)} $f$ has nonnegative M\"obius transform;

\smallskip

{\rm(d)} $f=\E(t(.,\mathbf{W}))$, where $\mathbf{W}$ is a random
graphon.

\smallskip

{\rm(e)} $f$ is in the convex hull of limits of homomorphism density
functions.
\end{prop}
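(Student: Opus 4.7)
The plan is to close the cycle (a)$\Rightarrow$(b)$\Rightarrow$(c)$\Rightarrow$(d)$\Rightarrow$(a) and treat (e) separately at the end. Three of the four arrows come essentially for free: (a)$\Rightarrow$(b) because every flat connection matrix is a principal submatrix of the full matrix $M(f)$; (c)$\Leftrightarrow$(d) because this is exactly the equivalence (a)$\Leftrightarrow$(d) of Theorem~\ref{THM:LIM-CHAR1}; and (d)$\Rightarrow$(a) because by Proposition~\ref{PROP:DAG-SEMI-1} the matrix $M(t(\cdot,W))$ is positive semidefinite for every single graphon $W$, and positive semidefiniteness survives taking an expectation over $\mathbf{W}$.

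The substantive step is (b)$\Rightarrow$(c). I would fix $n$ and, for each graph $H$ with $V(H)=[n]$, introduce the indicator vector $u_H\in\R^{\FF'_n}$ with $(u_H)_F=\one_{F\subseteq H}$. Möbius inversion on the Boolean lattice of edge sets gives $f(F_1 F_2)=f(F_1\cup F_2)=\sum_{H\supseteq F_1\cup F_2}f^\dag(H)$; since $F_1\cup F_2\subseteq H$ is equivalent to $F_1\subseteq H$ and $F_2\subseteq H$, this rewrites as the rank-$1$ decomposition
\[
M_\flat(f,n)\;=\;\sum_{H:\,V(H)=[n]} f^\dag(H)\,u_H u_H\T.
\]
The $2^{\binom{n}{2}}$ vectors $\{u_H\}$ are the columns of the zeta matrix of the Boolean lattice of subsets of $\binom{[n]}{2}$, hence a basis of $\R^{\FF'_n}$. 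Letting $\{v_H\}$ be the dual basis, positive semidefiniteness of $M_\flat(f,n)$ yields $0\le v_H\T M_\flat(f,n) v_H=f^\dag(H)$ for every such $H$; since $n$ is arbitrary, $f^\dag\ge 0$ on every graph.

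For (d)$\Leftrightarrow$(e) I would argue this is a restatement in the right topology. By Proposition~\ref{PROP:DAG-SEMI-1}, the limits of homomorphism density functions are precisely the parameters $t(\cdot,W)$ with $W\in\WW_0$, so (d) asserts $f=\int t(\cdot,W)\,d\mu(W)$ for some Borel probability measure $\mu$ on $(\WW_0,\delta_\square)$, while (e) asserts that $f$ is a pointwise limit of finite convex combinations of such $t(\cdot,W)$. Finitely supported $\mu$ handle one direction immediately; for the other I would approximate $\mu$ by finitely supported measures matching its values $\E(t(F,\mathbf{W}))$ on a growing finite family of graphs $F$, using that a random graphon is determined by the countably many random variables $t(F,\mathbf{W})$.

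The hard part will be spotting the rank-$1$ decomposition in (b)$\Rightarrow$(c): the crux is that invertibility of the zeta matrix of the Boolean lattice of subsets of $\binom{[n]}{2}$ is exactly the algebraic input needed to transfer positive semidefiniteness of $M_\flat(f,n)$ into coordinate-wise nonnegativity of $f^\dag$. Once this identity is in hand, the rest of the proof is either quoted from Proposition~\ref{PROP:DAG-SEMI-1} and Theorem~\ref{THM:LIM-CHAR1} or is a standard approximation argument.
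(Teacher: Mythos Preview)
Your proof is correct and matches the paper's approach: your rank-$1$ decomposition $M_\flat(f,n)=\sum_H f^\dag(H)\,u_H u_H^\top$ is exactly the Lindstr\"om--Wilf formula $M_\flat(f,k)=Z^\top D Z$ that the paper invokes by name, and your dual-basis extraction of $f^\dag(H)\ge 0$ is the observation that $Z$ is invertible. The only cosmetic difference is that the paper closes the cycle via (d)$\Rightarrow$(e)$\Rightarrow$(a) rather than your (d)$\Rightarrow$(a) with (d)$\Leftrightarrow$(e) handled separately; your treatment of (e) is in fact a bit more explicit than the paper's one-line citation.
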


While the proof here is similar, there are some differences, and we
include it for completeness.

\begin{proof}
(a)$\Rightarrow$(b) is trivial.

\smallskip

(b)$\Rightarrow$(c): The Lindstr\"om--Wilf Formula gives the
following diagonalization of $M_\flat(f,k)$: Let $Z$ denote the
$\FF'_k\times\FF'_k$ matrix defined by
$Z_{F_1,F_2}=\one_{F_1\subseteq F_2}$. Let $D$ be the diagonal matrix
with $D_{F,F}=f^\dag(F)$. Then $M_\flat(f,k)=Z\T D Z$. This implies
that $M_\flat(f,k)$ is positive semidefinite if and only if
$f^{\dag}\ge 0$ for all graphs with $k$ nodes.

\smallskip

(c)$\Rightarrow$(d): Let $f$ be an isolate-indifferent, normalized
graph parameter with nonnegative M\"obius transform. By Theorem
\ref{THM:LIM-CHAR1}, it defines a random graphon $\mathbf{W}$ such
that $f=\E(t(.,\mathbf{W}))$.

\smallskip

(d)$\Rightarrow$(e): By Theorem \ref{THM:LIM-CHAR1}, each
$t(.,\mathbf{W})$ is the limit of homomorphism density functions for
every $\mathbf{W}$.

\smallskip

(e)$\Rightarrow$(a): Every homomorphism density function $f$ is
reflection positive, and this is clearly inherited to their limits,
and then to the convex hull of these limits.
\end{proof}

The following propositions describe connections between
graph-theoretic and group-theoretic properties of countable random
graph models. They also indicate a connection with ergodic theory.

\begin{prop}\label{PROP:ERGOD}
A countable random graph model is consistent if and only if it
invariant.
\end{prop}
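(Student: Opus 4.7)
The plan is to prove each direction separately, with the invariant-implies-consistent direction being essentially a matter of unwinding definitions, and the consistent-implies-invariant direction requiring a standard extension-of-measure argument.

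For the forward direction (invariant $\Rightarrow$ consistent), I would argue as follows. Fix two ordered finite subsets $S = (s_1, \dots, s_n)$ and $S' = (s'_1, \dots, s'_n)$ of $\N$. There is a permutation $\sigma$ of $\N$ with $\sigma(s_i)=s'_i$ for all $i$. The event ``the labeled induced subgraph on $S$ (with label $i$ at $s_i$) equals $H$'' is carried by $\sigma$ to the event ``the labeled induced subgraph on $S'$ (with label $i$ at $s'_i$) equals $H$'', so invariance immediately gives equality of these probabilities. Hence the distribution of the labeled induced subgraph depends only on $n=|S|$.

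For the reverse direction (consistent $\Rightarrow$ invariant), let $\sigma$ be any permutation of $\N$, and let $\sigma_*\Pr$ denote the pushforward of $\Pr$ under the natural action of $\sigma$ on $\{0,1\}^{\binom{\N}{2}}$. I would show that $\Pr$ and $\sigma_*\Pr$ agree on all cylinder events. A generic cylinder event is specified by a finite set $S\subseteq\N$ and a graph $H$ on $S$: the event is ``the induced subgraph on $S$ equals $H$''. Choosing an ordering of $S$ makes $H$ into a labeled graph on $[n]$; then under $\sigma_*\Pr$ the probability of this event equals the probability under $\Pr$ that the induced labeled subgraph on the ordered set $(\sigma^{-1}(s_1),\dots,\sigma^{-1}(s_n))$ equals $H$. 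By consistency, both probabilities depend only on $n$ and $H$, and therefore coincide.

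The final step is to promote agreement on cylinder events to agreement on the entire $\sigma$-algebra $\AA$. The cylinder events form a $\pi$-system that generates $\AA$ by definition, so Dynkin's $\pi$-$\lambda$ theorem (equivalently, the uniqueness part of the Kolmogorov extension theorem) gives $\Pr=\sigma_*\Pr$, establishing invariance under $\sigma$.

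I do not anticipate a significant obstacle: the argument is almost a tautology once the definitions are aligned, and the only nontrivial ingredient is the standard measure-theoretic fact that two probability measures agreeing on a generating $\pi$-system must coincide. The one subtlety to watch is that ``consistency'' is phrased in terms of ordered sets; care must be taken that the two orderings used on $S$ (one on the $\Pr$ side, one on the $\sigma_*\Pr$ side) are compatible via $\sigma$, as indicated above.
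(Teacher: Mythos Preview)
Your proposal is correct and follows essentially the same idea as the paper's proof. The paper's argument is more terse: for the converse it observes that a consistent countable model induces a consistent finite model, which (via the (b)$\to$(c) construction in Theorem~\ref{THM:LIM-CHAR1}) extends back to a \emph{unique} countable model independent of labeling; your version unpacks this by showing directly that $\Pr$ and $\sigma_*\Pr$ agree on cylinder events and then invoking the $\pi$--$\lambda$ theorem, which is exactly the uniqueness step hidden in the paper's one-line appeal to the extension.
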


\begin{proof}
It is trivial that invariant countable random graph models are
consistent. Conversely, if a countable random graph model is
consistent, then it defines a consistent finite graph model, which in
turn defines a unique countable random graph model, independently of
the labeling of the nodes.
\end{proof}

\begin{prop}\label{PROP:ERGOD-2}
A consistent countable random graph model is local if and only if it
is ergodic.
\end{prop}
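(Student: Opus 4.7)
The strategy is to view both conditions---locality and ergodicity---as characterizations of the extreme points of the appropriate convex set, and then to transfer between them using the bijection of Theorem~\ref{THM:LIM-CHAR1}. The first step is to check that the cycle of constructions (a)$\to$(b)$\to$(c)$\to$(d)$\to$(a) in the proof of Theorem~\ref{THM:LIM-CHAR1} is \emph{affine} in the underlying distribution: the M\"obius transform in (a)$\to$(b), the Kolmogorov extension in (b)$\to$(c), the push-forward under the almost-sure cut-metric limit of $W_{\Ge_n}$ in (c)$\to$(d), and the expectation $f=\E(t(\cdot,\mathbf{W}))$ in (d)$\to$(a) are all linear operations. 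Hence the resulting bijection between consistent countable random graph models and random graphon models preserves convex combinations, and in particular sends extreme points to extreme points.

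By Proposition~\ref{PROP:ERGOD}, a consistent countable random graph model is exactly a permutation-invariant Borel probability measure on $(\{0,1\}^{\binom{\N}{2}},\AA)$, and a classical fact of ergodic theory, valid for any measurable group action, identifies the extreme points of this convex set with the ergodic measures. On the graphon side, the extreme points of the Borel probability measures on the compact metric space $(\WW_0,\delta_\square)$ are the Dirac masses $\delta_W$---for any non-Dirac $\mu$ one can select a measurable set of intermediate mass and decompose via conditioning. Under the bijection, a Dirac mass $\delta_W$ corresponds by (d)$\to$(a) to the multiplicative parameter $f=t(\cdot,W)$, and by Theorem~\ref{THM:LIM-CHAR} the multiplicative, normalized, nonnegative-M\"obius-transform parameters obtained in this way are precisely the local consistent countable random graph models.

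Chaining these observations yields the chain of equivalences: local $\Leftrightarrow$ the associated random graphon is a Dirac mass $\Leftrightarrow$ extreme point among random graphon models $\Leftrightarrow$ extreme point among consistent (equivalently, invariant) countable models $\Leftrightarrow$ ergodic. The main technical obstacle I anticipate is a careful verification that the step (c)$\to$(d) is affine: one must show that the map sending a consistent countable model to the law of the almost-sure cut-metric limit of its induced subgraphs is well-defined and commutes with mixtures, which reduces to measurability of the limit map with respect to the full Borel $\sigma$-algebra of $(\WW_0,\delta_\square)$. A secondary subtlety is the ``Dirac $\Leftrightarrow$ multiplicative'' identification, which implicitly uses the fact (cited from \cite{BCL}) that a graphon is determined up to weak isomorphism by its sequence of homomorphism densities.
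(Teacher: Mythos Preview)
Your approach and the paper's coincide on the direction ``ergodic $\Rightarrow$ local'': both observe that an ergodic measure is an extreme point among invariant measures, pass through the correspondence of Theorem~\ref{THM:LIM-CHAR1} to conclude that the associated random graphon is a Dirac mass, and then invoke Theorem~\ref{THM:LIM-CHAR} to obtain locality. You are more explicit than the paper about why the bijection preserves extremality (namely, because each of the four constructions in the cycle is affine in the underlying distribution), and that added care is warranted---the paper's single clause ``therefore this random graphon must be concentrated on a single graphon'' is exactly the step you unpack.

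For the converse, your route differs. You run the extreme-point argument in reverse: a local model corresponds to a Dirac mass, Dirac masses are extreme among Borel probabilities on $(\WW_0,\delta_\square)$, and the affine bijection carries extremality back to the countable-model side, yielding ergodicity. The paper instead argues the contrapositive by a direct computation: if $\mu=\tfrac12(\mu_1+\mu_2)$ with $\mu_1\neq\mu_2$ invariant, pick disjoint finite sets $S,T$ of equal size and a graph $F$ with $a_1=\Pr(\Ge_1[S]=F)\neq\Pr(\Ge_2[S]=F)=a_2$, and compute
\[
\Pr(\Ge[S]=F,\Ge[T]=F)-\Pr(\Ge[S]=F)\Pr(\Ge[T]=F)=\tfrac14(a_1-a_2)^2>0,
\]
violating locality. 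Your structural argument is cleaner and sidesteps a subtlety in that computation (the displayed identity tacitly replaces $\Pr(\Ge_i[S]=F,\Ge_i[T]=F)$ by $a_i^2$, i.e.\ uses locality of the components $\mu_i$); the paper's argument, on the other hand, is self-contained once one grants that step and does not require verifying affineness of the (c)$\to$(d) limit map. The two technical points you flag---measurability of the a.s.\ cut-limit map and the uniqueness-of-graphon input from \cite{BCL} behind ``Dirac $\Leftrightarrow$ multiplicative''---are exactly the right places to be careful, and both are routine to justify.
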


\begin{proof}
Let $\mu$ be an invariant probability measure on the Borel sets in
$\{0,1\}^{\binom{\N}{2}}$. By Proposition \ref{PROP:ERGOD} it is
consistent, and so by Theorem \ref{THM:LIM-CHAR1} it is defined by a
random graphon. If $\mu$ is ergodic, then $\mu$ is an extreme point
of all invariant distributions, and therefore this random graphon
must be concentrated on a single graphon. Thus Theorem
\ref{THM:LIM-CHAR} implies that $\mu$ is local.

Conversely, if $\mu$ is not ergodic, then $\mu=\frac12(\mu_1+\mu_2)$,
where $\mu_1,\mu_2$ are invariant probability measures and
$\mu_1\not=\mu_2$. Let $\Ge_1$ and $\Ge_2$ be random countable graphs
from the distributions $\mu_1$ and $\mu_2$, respectively, and let
$\Ge$ be $\Ge_1$ with probability $1/2$ and $\Ge_2$ with probability
$1/2$. Let $S\subseteq\N$ be a finite set and $F$ a labeled graph on
$|S|$ nodes such that $\Pr(\Ge_1[S]=F)\not=\Pr(\Ge_2[S]=F)$. Let
$T\subseteq\N$ be another set with $|T|=|S|$ and $T\cap S=\emptyset$.
Set $a_1=\Pr(\Ge_1[S]=F)=\Pr(\Ge_1[T]=F)$ (by invariance, these two
probabilities are equal), and define $a_2$ analogously.

Thus we have
\begin{align*}
\Pr(\Ge[S]&=F,\Ge[T]=F)-\Pr(\Ge[S]=F)\Pr(\Ge[T]=F)\\
&=\frac12\bigl(\Pr(\Ge_1[S]=F,\Ge_1[T]=F)+\Pr(\Ge_2[S]=F,\Ge_2[T]=F)\bigr)\\
&~~~- \frac14\bigl(\Pr(\Ge_1[S]=F)+\Pr(\Ge_2[S]=F)\bigr)
\bigl(\Pr(\Ge_1[T]=F)+\Pr(\Ge_2[T]=F)\bigr)\\
&=\frac12(a_1^2+a_2^2)-\frac14(a_1+a_2)^2=\frac14(a_1-a_2)^2>0.
\end{align*}
This shows that $\mu$ is not local.
\end{proof}

\section{Weak Positivstellensatz for graphs}

Let $x=\alpha_1 F_1 + \dots + \alpha_r F_r$ be any quantum graph. We
say that $x\ge 0$ if $t(x,W)=\sum_i\alpha_it(F_i,W)\ge 0$ for every
$W\in\WW_0$. Hence $x\ge 0$ if and only if $\sum_i\alpha_if(F_i)\ge
0$ for every multiplicative, reflection positive graph parameter $f$.
Proposition \ref{PROP:DAG-SEMI-2} implies that this is equivalent to
saying that $\sum_i\alpha_if(F_i)\ge 0$ for every
isolate-indifferent, reflection positive parameter $f$.

An easy example of quantum graphs $x\ge 0$ is any quantum graph of
the form $\sum_i y_i^2$, where the $y_i$ are $k$-labeled quantum
graphs for some $k\ge 0$ (and the labels are ignored after squaring).

One may ask whether every quantum graph $x\ge 0$ can be represented
this way. We don't know the answer, although based on the analogy of
polynomials, the answer is probably negative. However, we prove the
following weaker version, which is analogous to Lasserre's result
\cite{Las} asserting that positive polynomials are approximately sums
of squares.

\begin{theorem}\label{THM:W-POS}
Let $x$ be a quantum graph. Then $x\ge 0$ if and only if for every
$\eps>0$ there is a $k\ge 1$ and $y_1,\dots,y_m\in\GG_k$ such that
$\|x-y_1^2-\dots-y_m^2\|_1<\eps$.
\end{theorem}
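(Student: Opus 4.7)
My plan is a Hahn--Banach separation argument combined with Proposition~\ref{PROP:DAG-SEMI-2}. The ``if'' direction is immediate: for every $W\in\WW_0$, $t(\cdot,W)$ is bounded and continuous in $\|\cdot\|_1$, and $t(y^2,W)$ equals an integral of a square, so $t(\sum_i y_i^2,W)\ge 0$; taking the $\|\cdot\|_1$-limit preserves the inequality $t(x,W)\ge 0$.

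For the ``only if'' direction, let $\AA$ be the Banach space obtained by completing quantum graphs under $\|\cdot\|_1$, and let $C\subseteq\AA$ be the closed convex cone generated by $\bigcup_{k\ge 1}\{\sum_i y_i^2 : y_i\in\GG_k\}$. I want to show that every $x\ge 0$ lies in $C$. Suppose for contradiction that $x\ge 0$ but $x\notin C$; by Hahn--Banach there is a bounded linear functional $\phi\in\AA^*$ with $\phi(x)<0$ and $\phi|_C\ge 0$. Defining the graph parameter $f(F):=\phi(F)$, boundedness of $\phi$ makes $f$ bounded, and the hypothesis $\phi(y^2)\ge 0$ for every $k$-labeled quantum graph $y$ is precisely the statement that each connection matrix $M(f,k)$ with $k\ge 1$ is positive semidefinite.

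To invoke Proposition~\ref{PROP:DAG-SEMI-2} I must arrange for $f$ to be normalized and isolate-indifferent (whereupon $M(f,0)\succeq0$ also follows by padding with a label-$1$ vertex). For isolate-indifference: if $F\simeq F'$ then $t(F,W)=t(F',W)$ for every graphon, so $F-F'$ is annihilated by every evaluation; the norm $\|\cdot\|_1$ is set up so that such $t$-null quantum graphs have zero norm (equivalently, $\|\cdot\|_1$ descends to the quotient of the quantum-graph space by the ideal of $t$-null elements), hence $\phi(F-F')=0$ and $f(F)=f(F')$. For normalization: the $2\times 2$ principal minor of $M(f,1)$ on the rows indexed by $L_1$ (the single labeled vertex) and any one-labeling $F^*$ of a graph $F$ yields $f(U_1)\,f(F^*F^*)\ge f(F)^2$, so $f(U_1)=0$ would force $f\equiv0$, contradicting $\phi(x)<0$; thus $f(U_1)>0$, and I rescale $\phi$ so that $f(U_1)=1$. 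Isolate-indifference then gives $f(K_0)=f(K_1)=1$. The main obstacle of the proof lies precisely in this isolate-indifference step: a generic bounded reflection-positive graph parameter need \emph{not} be isolate-indifferent (as one sees from parameters supported on edgeless graphs with geometric values $f(U_n)=a^n$), so the Hahn--Banach argument only works because $\|\cdot\|_1$ forces $\phi$ to respect the equivalence $\simeq$.

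Finally, $f$ is isolate-indifferent, normalized, and reflection positive, so Proposition~\ref{PROP:DAG-SEMI-2}\,(d) produces a random graphon $\mathbf{W}$ with $f(F)=\E(t(F,\mathbf{W}))$ for every graph $F$. Therefore
\[
\phi(x)=\E\bigl(t(x,\mathbf{W})\bigr)\ge 0
\]
by the hypothesis $x\ge 0$, contradicting $\phi(x)<0$. Hence $x\in C$, as required.
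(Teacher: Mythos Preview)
Your Hahn--Banach approach is sound and genuinely different from the paper's. The paper works entirely in finite-dimensional matrix cones: it encodes $x$ as a diagonal matrix $A\in\MM_k$, observes that $x\ge 0$ means $A$ lies in the polar of $\Fi_k(\PP\cap\LL)$, proves the identity $\Fi_k(\PP\cap\LL)=\bigcap_{m\ge k}\Fi_{m,k}\RR_m$ via a compactness/limit argument together with Proposition~\ref{PROP:DAG-SEMI-2}, and then dualizes in the finite space $\MM_m$ to split an approximant $\Fi_{m,k}^*A_m$ as $P+L$ with $P\in\PP_m$ and $L\in\LL_m^\perp$; the Gram decomposition of $P$ produces the sum of squares. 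You instead separate once in the infinite-dimensional $\ell_1$-completion and let Proposition~\ref{PROP:DAG-SEMI-2} carry all the structural weight, which is shorter and more conceptual but less constructive.

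One point deserves care. Your isolate-indifference step asserts that $\|F-F'\|_1=0$ whenever $F\simeq F'$, i.e.\ that $\|\cdot\|_1$ already lives on the quotient by $\simeq$. This is \emph{not} a consequence of the naive $\ell_1$-norm on formal linear combinations of isomorphism types: under that norm $\|K_1-K_0\|_1=2$, and a Hahn--Banach functional could perfectly well distinguish $K_0$ from $K_1$ (your own example $f(U_n)=a^n$ extends to a bounded reflection-positive functional on that space). You are right that the theorem can only be meant in the quotient---for instance $x=K_0$ is $\ge 0$, yet every square coming from $\GG_k$ with $k\ge 1$ has zero $K_0$-coefficient, so no $\eps$-approximation is possible otherwise---and the paper makes the same tacit identification when it assumes all $F_i$ share a common vertex count and when the $\LL_m^\perp$ term is absorbed at the end. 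But this is a convention about the ambient space, not something the norm ``is set up'' to deliver; state it as a standing hypothesis rather than deriving it.
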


\begin{proof}
For $n\ge k\ge 0$, let $\FF_k$ denote the set of $k$-labeled simple
graphs on $[k]$ (up to isomorphism). Let $\Fi_k$ denote the operator
mapping a matrix $\MM$ to its restriction to $\FF_k\times\FF_k$. Then
$\MM_k=\Fi_k\MM$ is the space of all symmetric $\FF_k\times\FF_k$
matrices, and $\PP_k=\Fi_k\PP$ is the positive semidefinite cone in
$\Fi_k\MM$. It is also clear that $\LL_k=\Fi_k\LL$ consists of those
matrices $A\in\MM_k$ in which $A_{F_1,G_1}=A_{F_2,G_2}$ whenever
$F_1G_1\simeq F_2G_2$. We set $\RR_k=\Fi_k\PP\cap\Fi_k\LL$. Clearly,
\begin{equation}\label{EQ:FIRR}
\Fi_k(\PP\cap\LL) \subseteq \RR_k,
\end{equation}
but equality may not hold in general.

We note that the entries of every matrix $A\in\RR_k$ are in
$[0,A_{\emptyset,\emptyset}]$. Indeed, looking at the $2\times 2$
submatrix formed by the rows corresponding to some $k$-labeled flat
graph $F$ and the $k$-labeled edgeless graph $U_k$. From
$A\in\Fi_k\LL$ it follows that $A_{U_k,F}=A){F,F}=A_{F,U_k}$, so
positive semidefiniteness implies that $A_{U_k,U_k}A_{F,F}\ge
A_{F,F}^2$. Since $A_{U_k,U_k}=A_{\emptyset,\emptyset}$ by
$A\in\Fi_k\LL$, we get that
$(A_{\emptyset,\emptyset}-A_{F,F})A_{F,F}\ge 0$, which implies that
$A_{F,F}\in[0,A_{\emptyset,\emptyset}]$.

For $k\le m$, we consider $\FF_k$ as a subset of $\FF_m$, by adding
$m-k$ isolated nodes labeled $k+1,\dots,m$. The corresponding
restriction operator on matrices we denote by $\Fi_{m,k}$.

We claim that the following weak converse of \eqref{EQ:FIRR} holds:
\begin{equation}\label{EQ:RQ}
\Fi_k(\PP\cap\LL) = \bigcap_{m\ge k} \Fi_{m,k}\RR_m.
\end{equation}
Indeed, let $A$ be a matrix that is contained in the right hand side.
Then for every $m\ge k$ we have a matrix $B_m\in\RR_m$ such that $A$
is a restriction of $B_m$. Now let $m\to\infty$; by selecting a
subsequence, we may assume that all entries of $B_m$ tend to a limit.
This limit defines a graph parameter $f$, which is normalized,
isolate-indifferent and flatly reflection positive. By Proposition
\ref{PROP:DAG-SEMI-2}, $f$ is reflection positive, and so the matrix
$M(f)$ is in $\PP\cap\LL$ and $\Fi_kM(f)=A$.

Let $x=\alpha_1 F_1 + \dots + \alpha_r F_r$. We may assume that
$|V(F_i)|=k$ for all $i$. Let $F_i'$ be obtained from $F_i$ by
labeling all its nodes. Let $A\in\MM_k$ denote the matrix
\[
A_{FG}=
  \begin{cases}
    \alpha_i, & \text{if $F=G=F_i$}, \\
    0, & \text{otherwise}.
  \end{cases}
\]
Then $x\ge 0$ means that $A\cdot Z\ge 0$ for all
$Z\in\Fi_k(\PP\cap\LL)$, in other words, $A$ is in the dual cone of
$\Fi_k(\PP\cap\LL)$. From \eqref{EQ:RQ} it follows that there are
diagonal matrices $A_m\in \MM_k$ such that $A_m\to A$ and $A_m\cdot
Y\ge 0$ for all $Y\in\Fi_{m,k}\RR_m$. In other words, $A_m\cdot
\Fi_{m,k}Z\ge 0$ for all $Z\in\RR_m$, which can also be written as
$\Fi_{m,k}^*A_m\cdot Z\ge 0$, where $\Fi_{m,k}^*:~\MM_k\to\MM_m$ is
the adjoint of the linear map $\Fi_{m,k}:~\MM_m\to\MM_k$. (This
adjoint acts by adding 0-s in all entries outside
$\FF_k\times\FF_k$.) So $\Fi_{m,k}^*A_m$ is in the polar cone of
$\RR_m=\PP_m\cap\LL_m$, which is $\PP_M^*+\LL_m^*$. The positive
semidefinite cone is self-polar. The linear space $\LL_m^*$ consists
of those matrices $B\in\MM_m$ for which
$\sum_{F_1,F_2}B_{F_1,F_2}=0$, where the summation extends over all
pairs $F_1,F_2\in\FF'_m$ for which $F_1F_2\simeq F_0$ for some fixed
graph $F_0$. Thus we have $\Fi_{m,k}A_m = P+L$, where $P$ is positive
semidefinite and $L\in\LL^*_m$. Since $P$ is positive semidefinite,
we can write it as $P=\sum_{k=1}^N v_kv_k\T$, where
$v_k\in\R^{\FF'_m}$. We can write this as
\[
\sum_{F_1,F_2\atop F_1F_2\simeq F_0}\sum_{k=0}^N v_{k,F_1}v_{k,F_2} =
  \begin{cases}
   (A_m)_{F_0,F_0},  & \text{if $F_1F_2\simeq F_0\in\FF_k$,} \\
    0, & \text{otherwise}.
  \end{cases}
\]
In other words,
\[
\sum_{k=1}^N \left(\sum_F v_{k,F}F\right)^2 = \sum_{F_0}
(A_m)_{F_0,F_0}F_0,
\]
which proves the Theorem.
\end{proof}

\end{document}